\newtheorem{theorem}{Theorem}[section]
\newtheorem{proposition}[theorem]{Proposition}
\theoremstyle{definition}
\newtheorem{definition}[theorem]{Definition}
\newtheorem{remark}[theorem]{Remark}
\numberwithin{equation}{section}
\begin{document}

\baselineskip=15pt

\title[Rational parallelism on complex manifolds]{Rational parallelism on complex manifolds}

\author[I. Biswas]{Indranil Biswas}

\address{School of Mathematics, Tata Institute of Fundamental
Research, Homi Bhabha Road, Mumbai 400005, India}

\email{indranil@math.tifr.res.in}

\author[S. Dumitrescu]{Sorin Dumitrescu}

\address{Universit\'e C\^ote d'Azur, CNRS, LJAD, France}

\email{dumitres@unice.fr}

\subjclass[2010]{32Q57, 53C30, 53A55}

\keywords{Parallelizable manifold, Fujiki class $\mathcal C$, rational parallelism, Lie group}

\date{}

\begin{abstract}
A Theorem of Wang in \cite{Wa}  implies that any holomorphic parallelism on a 
compact complex manifold $M$ is flat with respect to some complex Lie algebra structure
whose dimension coincides with that of $M$. 
We study here rational parallelisms on complex manifolds. We exhibit rational 
parallelisms on compact complex manifolds which are not flat with respect to any complex Lie 
algebra structure.
\end{abstract}

\maketitle

\tableofcontents

\section{Introduction}

A well-known result of Wang \cite{Wa} classifies compact complex manifolds $M$ with 
holomorphically trivial holomorphic tangent bundle $TM$.

Recall that a holomorphic trivialization of $TM$ is defined by a holomorphic one-form $\omega 
\,\in\, \Omega^1 (M,\, V)$ with values in a complex vector space $V$ with $\dim 
V\,=\,\dim_{\mathbb C} M$ such that $\omega_m \,:\, T_mM \,\longrightarrow\, V$ is a linear 
isomorphism at every $m \,\in\, M$. Hence $\omega$-constant vector fields define a parallelism 
on $M$ and trivialize $TM$. A compact complex manifold with trivial tangent bundle is called 
(holomorphically) {\it parallelizable.}

In this context Wang proved that there exists a complex Lie algebra structure $\mathfrak L$ on 
$V$ such that $\omega$ realizes a Lie algebra isomorphism between $TM$ and $\mathfrak L$ (see 
Theorem \ref{Wang} here). Notice that this is equivalent to
the assertion that $\omega$, seen as Cartan 
geometry modeled on $\mathfrak L$, is {\it flat} (\cite{Sh}, Chapter 3 and Chapter 5), meaning 
that $\omega$ satisfies the Maurer-Cartan equation with respect to the Lie algebra structure 
$\mathfrak L$. In view of Darboux-Cartan Theorem, \cite[Chapter 3, p. 116]{Sh}, this implies that $M$ 
inherits a $(L,\,L)$-structure in the sense of Ehresmann--Thurston \cite{Eh} with $L$ being a complex 
connected Lie group with Lie algebra $\mathfrak L$. The compactness assumption
on $M$ ensures that the $(L,\,L)$-structure is complete and $M$ is biholomorphic to a
quotient of $L$ by a lattice in it. This result was extended by Winkelmann to certain open complex manifolds \cite{Wi1, 
Wi2}.

We study here rational parallelisms on compact complex manifolds given by holomorphic one-forms 
$\omega\,\in\, \Omega^1 (M,\, V)$ such that $\omega_m \,:\, T_mM \,\longrightarrow\,
V$ is a linear isomorphism for $m$ 
lying in an open dense subset $U\,=\,M \setminus S$ (the subset $S$ is an analytic divisor, see Proposition \ref{divisor}). In
this case, $\omega$--constant vector fields are
actually meromorphic on $M$ with poles lying in $S$ and they 
holomorphically trivialize $TU$. This definition corresponds to the particular case of a {\it 
branched Cartan geometry}, whose model is a Lie group, as introduced and studied by the authors in 
\cite{BD}. We prove here (see Theorem \ref{flat}) that on compact complex manifolds $M$ which 
are either in the Fujiki class $\mathcal C$ (i.e. $M$ is the meromorphic image of a K\"ahler 
manifold \cite{Fu}), or of complex dimension two or of algebraic dimension zero (i.e. all 
meromorphic functions on $M$ are constant), all such rational parallelisms are flat with 
respect to some complex Lie algebra structure on $V$ (meaning $\omega$ is a Lie algebra 
isomorphism). For such a situation we deduce that the fundamental group of $M$ is infinite.
 
Flat rational parallelisms were studied in particular in \cite{BC}.
 
The main result proved here (Theorem \ref{non flat}) exhibits examples of rational 
parallelisms on compact complex manifolds which are non-flat with respect to any complex Lie 
algebra structure.

\section{Holomorphic parallelisms}

In this section we recall the proof of Wang's classification Theorem of (holomorphically) 
parallelizable manifolds.  The idea of this proof will be useful later on  in the study of flatness for  rational parallelisms.

Let us first recall the following:

\begin{definition} 
A holomorphic trivialization (parallelization) of the holomorphic tangent bundle $TM$ of a 
compact complex manifold $M$ of (complex) dimension $m$ is a holomorphic one-form $\omega\,\in\, 
\Omega^1(M, \,V)$ with values in a complex vector space $V$ of dimension $m$ such that $\omega_u 
\,:\, T_uM \,\longrightarrow\, V$ is a linear isomorphism at
every $u \,\in\, M$.
\end{definition}

\begin{theorem}[{Wang, \cite{Wa}}]\label{Wang}
Let $M$ be a compact
complex manifold of dimension $m$ and $$\omega \,:\, TM \,\longrightarrow\, V$$ a
holomorphic trivialization of its holomorphic tangent bundle. Then the universal cover of $M$
is biholomorphic to a complex Lie group $L$, and the pull-back of $\omega$ on $L$ coincides with
the Maurer-Cartan form of $L$. Consequently,
$d \omega + \frac{1}{2} \lbrack \omega,\, \omega \rbrack_{\mathfrak L} \,=\,0$,
where $\mathfrak L$ is the Lie algebra of $L$. The manifold $M$ is  biholomorphic to a quotient of $L$ by a lattice in it.
 
Moreover, $M$ is K\"ahler if and only if $L$ is abelian. In this case $M$ is a compact complex torus.
\end{theorem}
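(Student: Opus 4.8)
The plan is to read off from $\omega$ a global holomorphic frame, show that its structure constants are forced to be constant by compactness, and then integrate the resulting Maurer--Cartan data.

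First I would let $X_1,\ldots,X_m$ be the $\omega$-constant vector fields, i.e.\ the holomorphic vector fields dual to the coframe determined by $\omega$; they form a global holomorphic frame of $TM$. Since $[X_i,X_j]$ is again a holomorphic vector field, I may write $[X_i,X_j]\,=\,\sum_k c_{ij}^k X_k$ for unique holomorphic functions $c_{ij}^k$ on $M$. Because $M$ is compact and connected, every global holomorphic function is constant (maximum principle), so the $c_{ij}^k$ are constants; the Jacobi identity for the Lie bracket of vector fields then makes $(V,c_{ij}^k)$ a complex Lie algebra, which is the sought structure $\mathfrak L$. A short computation using $d\omega(X_i,X_j)\,=\,-\omega([X_i,X_j])$ (the remaining two terms vanish since $\omega(X_j)$ is constant) yields the Maurer--Cartan equation $d\omega+\frac12[\omega,\omega]_{\mathfrak L}\,=\,0$.

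Next I would pass to the universal cover $\pi\,:\,\widetilde M\,\longrightarrow\, M$ and pull $\omega$ back to a $\mathfrak L$-valued one-form $\widetilde\omega$ satisfying the same Maurer--Cartan equation. Let $L$ be the simply connected complex Lie group with Lie algebra $\mathfrak L$ and let $\theta$ be its Maurer--Cartan form. By the Cartan--Darboux theorem \cite{Sh}, the Maurer--Cartan equation together with pointwise invertibility produces a developing map, a local biholomorphism $D\,:\,\widetilde M\,\longrightarrow\, L$ with $D^*\theta\,=\,\widetilde\omega$, unique up to left translation. The crucial point is to upgrade $D$ to a global biholomorphism: since $M$ is compact, each $X_i$ is a \emph{complete} holomorphic vector field, hence so are the lifts to $\widetilde M$; as $D$ carries these to the (complete) left-invariant fields on $L$, completeness forces $D$ to be a covering map onto $L$, and since both $\widetilde M$ and $L$ are simply connected, $D$ is a biholomorphism. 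This identifies $\widetilde M$ with $L$ carrying $\widetilde\omega$ as its Maurer--Cartan form.

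Finally the deck group $\Gamma\,=\,\pi_1(M)$ acts on $L\,\cong\,\widetilde M$ preserving $\widetilde\omega\,=\,\theta$, because $\pi\circ\gamma\,=\,\pi$; but the biholomorphisms of $L$ fixing the left-invariant Maurer--Cartan form are exactly the left translations, so $\Gamma$ embeds as a discrete subgroup of $L$, and compactness of $M\,=\,L/\Gamma$ forces $\Gamma$ to be a lattice. For the K\"ahler statement, writing $\omega\,=\,\sum_i\omega^i e_i$ in a basis $(e_i)$ of $V$, the dual coframe $\omega^i$ consists of holomorphic one-forms on $M$; on a compact K\"ahler manifold these are closed (Hodge theory), and the component form $d\omega^k\,=\,-\frac12\sum_{i,j}c_{ij}^k\,\omega^i\wedge\omega^j$ of Maurer--Cartan then forces all $c_{ij}^k\,=\,0$, i.e.\ $\mathfrak L$ abelian. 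Conversely, if $\mathfrak L$ is abelian then $L\,=\,\mathbb C^m$ and $M\,=\,\mathbb C^m/\Gamma$ is a complex torus, hence K\"ahler. I expect the main obstacle to be precisely the completeness argument upgrading the local developing map to a global biholomorphism onto $L$; the remaining steps are the maximum principle, a formal computation, and standard Hodge theory.
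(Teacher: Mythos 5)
Your proposal is correct and follows essentially the same route as the paper: constancy of the holomorphic structure functions by compactness, the Lie--Cartan identity yielding the Maurer--Cartan equation, completeness of the $X_i$ coming from compactness, and closedness of holomorphic one-forms handling the K\"ahler case. The only cosmetic difference is that the paper integrates the complete fields to a locally free transitive $L$-action on $M$ itself, whereas you run the equivalent developing-map/covering-space argument on the universal cover.
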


\begin{proof}
Consider a basis $(e_1,\, \cdots,\, e_m)$ of the complex vector space $V$, and let
$X_{1},\, X_{2},\, \cdots, \,X_{m}$ be global holomorphic $\omega$-constant vector fields
on $M$ such that $\omega (X_i)\,=\,e_i$ for all $i$. Consequently, the vector fields
$X_{1},\, \cdots, \,X_{m}$ span $TM$. 

Notice that for all $1\,\leq\, i,\,j \,\leq\, m$, we have $$\lbrack X_{i}, \,X_{j} \rbrack 
\,=\,f_{1}^{ij} X_{1} + f_{2}^{ij} X_{2} +\ldots+ f_{m}^{ij}X_{m}$$ with $f_{k}^{ij}$ being 
holomorphic functions on $M$. Since $M$ is compact, these (holomorphic) functions are constant 
and, consequently, $X_{1},\, X_{2},\, \cdots, \,X_{m}$ generate a $m$-dimensional complex Lie 
algebra $\mathfrak L$. When $V$ is endowed with the Lie algebra structure of $\mathfrak L$, the 
form $\omega$ produces a Lie algebra isomorphism.

By Lie's theorem, there exists a unique connected simply connected complex Lie group $L$ 
corresponding to $\mathfrak L$. The holomorphic parallelization of $M$ by $\omega$-constant 
holomorphic vector fields is locally isomorphic to the parallelization given by the left
translation-invariant vector fields on the Lie group $L$.

Since $M$ is compact, all vector fields $X_{i}$ are complete and they define a holomorphic 
locally free transitive action of $L$ on $M$ (with discrete kernel). Hence $M$ is biholomorphic 
to a quotient of $L$ by a cocompact discrete subgroup $\Gamma$ in $L$.

The Lie-Cartan formula
$$d\omega (X_{i},\,X_{j})\,=\,X_i \cdot \omega (X_j)-X_j \cdot \omega(X_i) -
\omega (\lbrack X_{i},\,X_{j} \rbrack)\,=\,-\omega (\lbrack X_{i},\,X_{j} \rbrack)\,=\,
-\lbrack \omega(X_i), \,\omega (X_j) \rbrack $$
shows that $\omega$ satisfies the Maurer-Cartan equation of the Lie group $L$, which can also be
expressed more formally as $d \omega + \frac{1}{2} \lbrack \omega, \omega \rbrack_{\mathfrak L}
\,=\,0$.

Assume now that $M$ is K\"ahler. Then, any holomorphic form on $M$ is closed. The Maurer-Cartan 
formula shows that the one-forms composing the isomorphism $\omega$ are all closed if and only 
if $L$ is abelian and thus $M$ is a compact complex torus, which is the
quotient of a complex vector space by a lattice.
\end{proof}

With the terminology of Cartan geometries \cite{Sh}, $\omega$ defines a flat Cartan geometry 
with respect to the Lie algebra structure $\mathfrak L$ on $V$. Indeed, the vanishing of 
Cartan's curvature is equivalent to the fact that $\omega$ satisfies the Maurer-Cartan 
equation $d \omega + \frac{1}{2} \lbrack \omega, \omega \rbrack_{\mathfrak L}=0$.

\section{Rational parallelisms}

We study here rational parallelisms defined by a {\it branched Cartan geometry} modeled on a 
Lie group in the sense of \cite{BD}. Let us begin with the following:

\begin{definition}
A branched holomorphic co-parallelism on the holomorphic tangent bundle $TM$ of a compact 
complex manifold $M$ of (complex) dimension $m$ is a holomorphic one-form $\omega \,\in\, 
\Omega^1(M,\, V)$ with values in a complex vector space $V$ of dimension $m$ such that 
$\omega_u \,:\, T_uM \to V$ is a linear isomorphism for $u$ in    an open dense subset $U$ in $M$ (which is 
necessarily the complement of a divisor in $M$, see Proposition \ref{divisor}).
\end{definition}

The branched holomorphic co-parallelism is {\it flat} in the sense of branched Cartan 
geometries \cite{BD}, with respect to some Lie algebra structure $\mathfrak L$ on $V$, if and 
only if $\omega$ satisfies the Maurer-Cartan equation $d \omega + \frac{1}{2} \lbrack \omega, \, 
\omega \rbrack_{\mathfrak L}\,=\,0$.

Notice that a basis $(e_1,\,\cdots,\, e_m)$ of $V$ uniquely defines a family $(X_1,\, \cdots,\, 
X_m)$ of meromorphic vector fields on $M$ such that $\omega(X_i)\,=\,e_i$. This family of 
meromorphic vector fields $X_i$ holomorphically span $TM$ at the generic point in $M$: they 
form a {\it rational parallelism} of $TM$.

Moreover, the branched holomorphic co-parallelism is flat with respect to the Lie algebra 
structure $\mathfrak L$, if and only if $\omega(\lbrack X_i, \,X_j \rbrack)  \,= \,\lbrack e_i,\, e_j 
\rbrack_{\mathfrak L}$, for all $i,\,j$.

 \begin{remark} \label{1} The pull-back of a (flat) holomorphic parallelism through a holomorphic map whose differential is invertible at the generic point gives rise to a (flat) branched holomorphic co-parallelism. Consequently,
a blow-up (or a ramified cover) of a parallelizable manifold is endowed with a flat branched holomorphic co-parallelism.
\end{remark}

The subset $M \setminus U$ where $\omega$ fails to be an isomorphism is called the {\it branching locus} of the co-parallelism.

Let us first prove:

\begin{proposition}\label{divisor}
The branching locus of a branched co-parallelism $\omega$ is either empty, or it is
an effective divisor in $M$ representing the canonical class. Consequently, if the
canonical class of $M$ is trivial (or more generally, does not have an effective
representative), any branched holomorphic co-parallelism on $TM$ has empty
branching locus and therefore $M$ is a parallelizable manifold.
\end{proposition}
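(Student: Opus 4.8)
The plan is to reinterpret $\omega$ as a morphism of holomorphic line bundles after passing to top exterior powers, and then to identify the branching locus with the zero locus of a holomorphic section of the canonical bundle $K_M$.

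First I would view the $V$-valued holomorphic one-form $\omega$ as a homomorphism of holomorphic vector bundles
$$\omega \,:\, TM \,\longrightarrow\, V \otimes_{\mathbb C} \mathcal{O}_M,$$
where $V \otimes_{\mathbb C} \mathcal{O}_M$ denotes the trivial holomorphic vector bundle on $M$ with fiber $V$; both bundles have rank $m \,=\, \dim_{\mathbb C} M$. Taking the top exterior power $\Lambda^m$ of this homomorphism produces a homomorphism of holomorphic line bundles
$$\Lambda^m \omega \,:\, \Lambda^m TM \,\longrightarrow\, \Lambda^m V \otimes_{\mathbb C} \mathcal{O}_M \,\cong\, \mathcal{O}_M,$$
the last isomorphism coming from the choice of a generator of the one-dimensional space $\Lambda^m V$. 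Since $\Lambda^m TM \,=\, K_M^{-1}$ is the anticanonical line bundle, the datum $\Lambda^m \omega$ is the same as a holomorphic section $s$ of $\mathcal{H}om(K_M^{-1},\, \mathcal{O}_M) \,=\, K_M$.

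Next I would observe that $s$ vanishes at a point $u$ precisely when $\omega_u \,:\, T_uM \to V$ fails to be a linear isomorphism, which is exactly the defining condition of the branching locus. Indeed, in local holomorphic coordinates and a local frame of $TM$, the form $\omega$ is represented by a matrix of holomorphic functions and $s$ is locally the determinant of that matrix, which is nonzero exactly where $\omega_u$ is invertible; by the functoriality of $\Lambda^m$ these local determinants glue into the global section $s$. Hence the zero divisor of $s$ is precisely the branching locus $M \setminus U$.

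Finally, since $U$ is open and dense, $\omega_u$ is an isomorphism for at least one $u$, so $s$ is not identically zero. A nonzero holomorphic section of a holomorphic line bundle over a compact complex manifold has a zero locus that is either empty or an effective divisor representing the class of the line bundle; applied to $s$ and $K_M$ this gives the first assertion. For the consequence, if $K_M$ has no nonzero effective representative, the zero divisor of $s$ must be empty, so $\omega_u$ is an isomorphism at every point, $\omega$ is a genuine holomorphic parallelism, and $M$ is parallelizable. I expect no genuinely hard step here: the construction is essentially formal, and the only point deserving a line of justification is that the local determinants assemble into a global section of $K_M$, which follows at once from the naturality of the top exterior power.
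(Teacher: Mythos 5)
Your proposal is correct and is essentially the paper's own argument: the paper chooses a basis of $V$, writes $\omega\,=\,(\omega_1,\,\cdots,\,\omega_m)$, and identifies the branching locus with the vanishing set of the section $\omega_1\wedge\cdots\wedge\omega_m$ of $K_M$, which is exactly your $\Lambda^m\omega$ in coordinates. The only difference is presentational — you phrase the construction functorially via top exterior powers rather than via a choice of basis — so there is nothing substantive to add.
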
 

\begin{proof} Choose a basis
$(e_1,\, \cdots,\, e_m)$ of $V$ over $\mathbb C$ and consider the corresponding components 
$\omega_i$ of $\omega$ in this basis: $$\omega\,=\,(\omega_1, \,\cdots,\, \omega_m)
\,\in\, \Omega^1(M, \,\mathbb C^m)\, .$$

The branching locus of the co-parallelism $\omega$ coincides with the vanishing set of the 
$m$-form $\omega_1 \wedge \ldots \wedge \omega_m$, considered as a holomorphic section of the 
canonical bundle $K_M$. Consequently, the branching locus of $\omega$ is either empty or it 
coincides with an effective divisor representing the canonical class of $M$.

If the canonical class of $M$ does not admit an effective representative, then the 
co-parallelism $\omega$ has empty branching locus. It follows that $M$ is parallelizable 
manifold, and a quotient of a complex Lie group $L$ by a lattice in it (see Theorem \ref{Wang}).
\end{proof} 

Let us now  prove:

\begin{theorem} \label{flat}
Let $M$ be a compact complex manifold endowed with a branched holomorphic co-parallelism 
$\omega \,\in\, \Omega^1(M,\,V)$.

(i) If $M$ is either in the Fujiki class $\mathcal C$ or a complex surface, then $\omega$ is
flat with respect to the abelian Lie algebra structure on $V$;

(ii) If $M$ is of algebraic dimension zero (so all meromorphic functions on $M$ are 
constants),  then $\omega$ is flat with respect to some complex Lie algebra structure $\mathfrak 
L$ on $V$;

(iii) If $\omega$ is flat with respect to some complex Lie algebra structure $\mathfrak L$ on 
$V$, then the fundamental group of $M$ is infinite. \end{theorem}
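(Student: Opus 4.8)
The plan is to treat the three parts separately, in each case extracting information about the meromorphic structure functions $f_k^{ij}:=\omega_k([X_i,X_j])$, where $(X_i)$ is the family of meromorphic vector fields dual to a basis $(e_i)$ of $V$ and $\omega=(\omega_1,\dots,\omega_m)$ in that basis. These $f_k^{ij}$ are meromorphic functions on $M$, holomorphic on $U$, and by the discussion preceding Proposition \ref{divisor} flatness with respect to a Lie algebra structure $\mathfrak L$ is exactly the statement that every $f_k^{ij}$ equals the corresponding (constant) structure constant of $\mathfrak L$. For (i), flatness with respect to the abelian structure means $d\omega=0$, i.e. the closedness of each global holomorphic one-form $\omega_k$ on $M$ (the form $\omega$ is holomorphic on all of $M$; only its pointwise inverse degenerates on $S$). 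On a compact complex surface the closedness of holomorphic one-forms is classical, while in the Fujiki class $\mathcal C$ one pulls each $\omega_k$ back under a bimeromorphic modification $\widetilde M\to M$ with $\widetilde M$ K\"ahler \cite{Fu}, applies Hodge theory on $\widetilde M$, and transports closedness back down using that the modification is a biholomorphism over a dense open set and that $d$ commutes with pullback.

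For (ii), I would use that $M$ has algebraic dimension zero, so every meromorphic function is constant; in particular each $f_k^{ij}$ equals a constant $c_k^{ij}$. Setting $[e_i,e_j]_{\mathfrak L}:=\sum_k c_k^{ij}e_k$ defines an antisymmetric bilinear bracket on $V$, and the Jacobi identity is inherited from the Lie bracket of the vector fields $X_i$ (valid on the dense set $U$ where the $X_i$ are holomorphic and $\omega$ is an isomorphism, hence valid identically). By construction $\omega([X_i,X_j])=[e_i,e_j]_{\mathfrak L}$, which is flatness.

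For (iii), which I expect to be the crux, the key observation is that although the $X_i$ have poles along $S$, the form $\omega$ is holomorphic on all of $M$; hence the Maurer-Cartan equation, holding on the dense set $U$, holds identically on $M$. Therefore $\omega$ defines a flat holomorphic connection on the trivial principal bundle $M\times L$ over the \emph{entire} manifold, where $L$ is the simply connected complex Lie group with Lie algebra $\mathfrak L$, yielding a holonomy homomorphism $\rho:\pi_1(M)\to L$ and a developing map $D:\widetilde M\to L$, defined on the whole universal cover, $\rho$-equivariant, with $D^*\omega_L=\widetilde\omega$ (\cite{Sh}), so that $D$ is a local biholomorphism over $\widetilde U$. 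Arguing by contradiction, suppose $\pi_1(M)$ is finite; then $\Gamma_0:=\rho(\pi_1(M))$ is a finite subgroup of $L$ acting freely by left translations, and $D$ descends to a holomorphic map $\overline D:M\to L':=\Gamma_0\backslash L$ which is a local biholomorphism on $U$. Since a positive-dimensional simply connected complex Lie group is non-compact (a connected compact complex Lie group is a torus, which is not simply connected), both $L$ and $L'$ are non-compact. Now $W:=\overline D(U)$ is a non-empty open subset of the connected manifold $L'$, the set $N:=\overline D(S)$ is a compact subset of real codimension at least two (the image of the divisor $S$), and $\partial W\subseteq N$ because $\overline D(M)=W\cup N$ is compact, hence closed. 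Removing the codimension-$\geq 2$ set $N$ leaves $L'$ connected, and $W\setminus N$ together with $L'\setminus(W\cup N)$ are disjoint open sets covering $L'\setminus N$; as the former is non-empty, the latter is empty, whence $\overline W=L'\subseteq\overline D(M)$, contradicting the non-compactness of $L'$. Thus $\pi_1(M)$ is infinite.

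The main obstacle I anticipate is justifying cleanly in (iii) that the flat structure, the holonomy and the developing map all extend across the branching divisor $S$ — precisely the locus where the dual frame $(X_i)$ blows up — which rests entirely on the holomorphy of $\omega$ itself (as opposed to its pointwise inverse) on $S$; a secondary point is making the image argument rigorous, namely controlling $\overline D(S)$ as a set of real codimension at least two and invoking the connectedness of its complement in $L'$.
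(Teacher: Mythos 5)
Your proof is correct, and parts (i) and (ii) follow the paper's argument essentially verbatim (Varouchas' theorem that class $\mathcal C$ manifolds are bimeromorphic to K\"ahler ones, closedness of holomorphic one-forms on compact surfaces, and constancy of the meromorphic structure functions in algebraic dimension zero). In part (iii), however, you take a genuinely different route at the endgame. The paper replaces $M$ by its universal cover, produces a developing map $d\colon M\to L$ into \emph{some} connected group $L$ with Lie algebra $\mathfrak L$, asserts tersely that $d(M)$ is ``closed (and open)'' so that $L$ is compact, deduces that $L$ is abelian, and then reruns the argument of (i): the components $\omega_i$ are closed, hence exact on the simply connected $M$, hence identically zero --- a contradiction. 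You instead keep $M$, work $\rho$-equivariantly with the \emph{simply connected} model $L$, descend to $L'=\Gamma_0\backslash L$ using finiteness of the holonomy image, and get the contradiction in one stroke from the non-compactness of $L'$ (a positive-dimensional simply connected complex Lie group cannot be compact, since compact connected complex Lie groups are tori); this shortcuts the paper's ``abelian $\Rightarrow$ closed $\Rightarrow$ exact $\Rightarrow$ zero'' chain. Your surjectivity argument --- removing $\overline D(S)$, which by Remmert's proper mapping theorem is an analytic subset of dimension at most $m-1$ whose complement in $L'$ is connected, and comparing the open set $\overline D(U)$ with the closed set $\overline D(M)$ --- is a rigorous expansion of the paper's unproved claim that the developing image is open and closed; an alternative shortcut is to note that $\overline D(M)$ is a compact analytic subset of $L'$ containing a nonempty open set, hence equal to $L'$ by irreducibility. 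Your observation that the Maurer--Cartan equation extends from $U$ to all of $M$ by continuity of the holomorphic form $d\omega+\frac{1}{2}[\omega,\omega]_{\mathfrak L}$, so that the flat structure and developing map exist across the branching divisor, is exactly the content of the branched Cartan geometry formalism the paper invokes from \cite{BD}.
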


\begin{proof} To prove (i), let us first deal with the case where $M$ is in Fujiki class $\mathcal C$ 
(meaning that $M$ is the meromorphic image of a K\"ahler manifold \cite{Fu}). Fix a basis 
$(e_1,\, \cdots,\, e_m)$ of $V$ over $\mathbb C$ and consider the corresponding components 
$\omega_i$ of $\omega$ in this basis: $$\omega\,=\,(\omega_1, \,\cdots,\, \omega_m)
\,\in\, \Omega^1(M, \,\mathbb C^m)\, .$$

The manifold $M$ being in class $\mathcal C$, by a result of Varouchas, \cite{Va},
it must be bimeromorphic to a 
K\"ahler manifold. Consequently, as for K\"ahler manifolds, all holomorphic one-forms on $M$ 
must be closed. This implies $d \omega_i \,=\,0$ and consequently $\omega$ is flat with respect 
to the abelian Lie algebra $\mathbb C^m$.

Moreover, since $M$ admits non-trivial closed holomorphic one-forms $\omega_i$, the 
abelianization of the fundamental group of $M$ is infinite. This gives the proof of statement (iii) 
when $\mathfrak L$ is abelian.

The same proof works for any compact complex surface, since any holomorphic one-form on a 
compact complex surface is closed (see, for example, \cite{Br}, p.  644).

Proof of (ii): Let us assume that $M$ is of algebraic dimension zero and is endowed with a branched 
holomorphic co-parallelism $\omega \,\in\, \Omega^1(M,\,V)$. Fix a basis $(e_1,\, \cdots, \, 
e_m)$ of $V$ over $\mathbb C$ and consider the corresponding meromorphic vector fields $X_i$ on 
$M$ defined by $\omega(X_i)\,=\,e_i$. They form a rational parallelism on $M$.

Notice that for all $1\,\leq\, i,\,j \,\leq\, m$, we have
$$\lbrack X_{i}, \,X_{j} \rbrack \,=\,f_{1}^{ij} X_{1} + f_{2}^{ij} X_{2} +\ldots+ f_{m}^{ij}X_{m}$$ 
with $f_{k}^{ij}$ being meromorphic functions on $M$. Since $M$ is of algebraic dimension zero, 
the functions $f_{k}^{ij}$ are all constants and consequently $X_{1},\, X_{2},\, \cdots, \,X_{m}$ 
generate a $m$-dimensional complex Lie algebra $\mathfrak L$. When $V$ is endowed with the Lie 
algebra structure of $\mathfrak L$, the form $\omega$ produces a Lie algebra isomorphism. Hence 
the rational parallelism is flat with respect to the structure of the Lie algebra $\mathfrak L$.

Proof of (iii): To prove by contradiction, assume that the fundamental group of $M$ is finite. 
Then, up to replacing $M$ by its universal cover endowed with the pullback of $\omega$, we shall 
assume that $M$ is simply connected.

Recall that the rational parallelism $\omega$  is supposed to be  flat with respect to the structure of the Lie algebra $\mathfrak L$.
The Darboux-Cartan Theorem (\cite{Sh}, Chapter 3, p. 116) implies that the open dense subset 
$U\, \subset\, M$ where $\omega$ is an isomorphism, inherits a $(L,\,L)$-structure in the sense 
of Ehresmann-Thurston \cite{Eh}, where $L$ is a complex connected Lie group
with Lie algebra $\mathfrak L$.

   Since $M$ is simply connected we get a holomorphic 
developing map $$d\,:\, M\,\longrightarrow\, L$$ which is a submersion on an open dense subset 
(outside the branching locus described in Proposition \ref{divisor})  (see Section 2.4 in \cite{BD}). The manifold $M$ being compact, the image 
$d(M)$ must be closed (and open) in $L$, hence $L\,=\,d(M)$ is compact. But compact complex Lie 
groups are abelian. This implies $\mathfrak L$ is abelian, and we conclude as before in (i) that 
the components $\omega_i$ of $\omega$ are closed holomorphic one-forms. The complex manifold $M$ 
being simply connected,  for each $i$ there exists a holomorphic function $h_i$ on $M$ such that 
$\omega_i\,=\,dh_i$. But holomorphic functions on compact manifolds are constant 
and, consequently, $\omega_i\,=\,dh_i\,=\,0$, for all $i$: a contradiction.
\end{proof}

In contrast to the unbranched case (Theorem \ref{Wang}) we exhibit the following non-flat 
examples, which are inspired by a construction of non-closed holomorphic one-forms in 
\cite{Br} (p. 648).

\begin{theorem}\label{non flat}
There exists a branched holomorphic co-parallelism $\omega \,\in\,\Omega^1(P_E,\,V)$
on some compact (non-K\"ahler) principal elliptic bundle $P_E$, over the product of
two Riemann surfaces $S_1$ and $S_2$ of genus $g\,\geq\, 2$, such that $\omega$ is non-flat
with respect to any complex Lie algebra structure $\mathcal L$ on $V$.
\end{theorem}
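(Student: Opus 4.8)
The plan is to realize $\omega$ on a principal elliptic bundle $\pi \colon P_E \,\longrightarrow\, B$ over $B\,=\,S_1 \times S_2$, built so that its fibre carries a non-closed holomorphic one-form, following Brunella's construction \cite{Br}. Write $p_1,\,p_2$ for the two projections of $B$, and let $T$ be the holomorphic vector field generating the fibrewise $E$-action, together with the tautological vertical one-form $\theta$ normalized by $\theta(T)\,=\,1$. I would choose the bundle $P_E$ so that $d\theta \,=\, \pi^* \eta$ for a \emph{nonzero holomorphic two-form} $\eta$ on $B$. Since $g_i \,\geq\, 2$ gives $\dim_{\mathbb C} H^0(S_i,\,\Omega^1)\,=\,g_i \,\geq\, 2$, I would then pick nonzero one-forms $\alpha$ on $S_1$ and $\beta$ on $S_2$ such that $p_1^*\alpha \wedge p_2^*\beta$ is \emph{not} a constant multiple of $\eta$, and set
\[
\omega_1\,=\,\pi^* p_1^*\alpha,\qquad \omega_2\,=\,\pi^* p_2^*\beta,\qquad \omega_3\,=\,\theta,
\]
giving $\omega\,=\,(\omega_1,\,\omega_2,\,\omega_3)\,\in\, \Omega^1(P_E,\,V)$ with $V\,=\,\mathbb C^3$.

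Next I would verify that $\omega$ is a branched holomorphic co-parallelism. As $\theta$ is nowhere zero along the fibres and $\omega_1,\,\omega_2$ vanish on $T$, the product $\omega_1 \wedge \omega_2 \wedge \omega_3 \,=\, \pi^*(p_1^*\alpha \wedge p_2^*\beta)\wedge\theta$ vanishes exactly on $\pi^{-1}(D)$, where $D\,\subset\, B$ is the zero divisor of $p_1^*\alpha \wedge p_2^*\beta$; this is the branching divisor predicted by Proposition \ref{divisor}. Being a principal elliptic bundle carrying the non-closed holomorphic one-form $\theta$, the total space $P_E$ is non-K\"ahler, indeed not in the Fujiki class $\mathcal C$, consistently with Theorem \ref{flat}(i).

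The heart of the argument is the non-flatness computation. Since $\alpha,\,\beta$ are holomorphic one-forms on curves they are closed, so $d\omega_1\,=\,d\omega_2\,=\,0$, while $d\omega_3\,=\,\pi^*\eta$. The two-forms $\eta$ and $p_1^*\alpha \wedge p_2^*\beta$ are both sections of the line bundle $K_B\,=\,\Omega^2_B$, so their ratio $\lambda\,=\,\eta/(p_1^*\alpha \wedge p_2^*\beta)$ is a global meromorphic function on $B$, which is non-constant precisely by the choice of $\alpha,\,\beta$ above. Contracting $\pi^*\eta$ with $T$ (on which $\omega_1,\,\omega_2$ vanish and $\theta(T)\,=\,1$) kills the $\omega_1\wedge\omega_3$ and $\omega_2\wedge\omega_3$ components, so in the generic coframe $(\omega_1,\,\omega_2,\,\omega_3)$ one has $\pi^*\eta\,=\,(\pi^*\lambda)\,\omega_1\wedge\omega_2$. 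If $\omega$ were flat for some Lie algebra structure $\mathfrak L$ on $V$ with structure constants $c_{ij}^k$, flatness reads $d\omega_k\,=\,-\sum_{i<j} c_{ij}^k\,\omega_i\wedge\omega_j$. Because the three two-forms $\omega_i\wedge\omega_j$ are linearly independent at every point of the open dense set where $\omega$ is an isomorphism, the equations $d\omega_1\,=\,d\omega_2\,=\,0$ force $c_{ij}^1\,=\,c_{ij}^2\,=\,0$, while the equation for $d\omega_3$ forces $c_{13}^3\,=\,c_{23}^3\,=\,0$ and $\pi^*\lambda\,=\,-c_{12}^3$, a constant. This contradicts the non-constancy of $\lambda$, so no Lie algebra structure $\mathfrak L$ on $V$ makes $\omega$ flat.

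I expect the main obstacle to be the existence step: producing a holomorphic principal $E$-bundle over $B$ whose curvature is a nonzero holomorphic two-form $\eta$. This is a realizability question for the Chern class. For $E\,=\,\mathbb C/\Lambda$ one uses the exponential sequence $0\,\longrightarrow\,\Lambda\,\longrightarrow\,\mathcal O_B\,\longrightarrow\,\mathcal E\,\longrightarrow\,0$, where $\mathcal E$ is the sheaf of holomorphic $E$-valued functions; the connecting homomorphism assigns to a bundle its topological type in $H^2(B,\,\Lambda)$, and the $(2,0)$-part of that class under the Hodge decomposition is exactly the holomorphic curvature form $\eta$. Since $h^{2,0}(B)\,=\,g_1 g_2\,\geq\, 4\,>\,0$, the Künneth decomposition of $H^2(B,\,\mathbb Z)$ supplies integral classes with nonzero $(2,0)$-component, so a suitable bundle exists, exactly as in Brunella \cite{Br}. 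The remaining points — that $\theta$ and $T$ are globally defined and holomorphic, and that $\alpha,\,\beta$ can be chosen so that $p_1^*\alpha\wedge p_2^*\beta$ is not proportional to the resulting $\eta$ — are routine once the bundle is fixed.
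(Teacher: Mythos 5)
Your overall strategy is the same as the paper's: both follow Brunella's construction of a principal elliptic bundle $P_E$ over $S_1\times S_2$ carrying a holomorphic connection form $\theta$ with $d\theta\,=\,\pi^*\eta$ for a nonzero holomorphic two-form $\eta$, and both complete $\theta$ to a coframe by pulled-back one-forms from the two factors. Your non-flatness computation is correct and in fact more explicit than the paper's: writing $\pi^*\eta\,=\,(\pi^*\lambda)\,\omega_1\wedge\omega_2$ with $\lambda\,=\,\eta/(p_1^*\alpha\wedge p_2^*\beta)$ a non-constant meromorphic function, and comparing coefficients against the Maurer--Cartan equation in the coframe $(\omega_1,\omega_2,\omega_3)$ on the open dense set where it is a coframe, is exactly the mechanism the paper uses (there the non-constant function is $g=(g_1^{-1}\circ\pi_1)(g_2^{-1}\circ\pi_2)$), just spelled out in more detail.

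The genuine gap is in the existence step, and your proposed justification for it would not go through as written. Producing integral (or $\Lambda$-valued) classes in $H^2(B,\Lambda)$ with \emph{nonzero} $(2,0)$-component is not the relevant condition. For the holomorphic principal $E$-bundle to exist the image of the class in $H^2(B,\mathcal O_B)$ (its $(0,2)$-part) must vanish, and for a \emph{holomorphic} connection with curvature the holomorphic form $\eta$ to exist the $(1,1)$-part (the Atiyah obstruction in $H^1(B,\Omega^1_B)$) must vanish as well; so one needs a $\Lambda$-valued class whose image in $H^2(B,\mathbb C)$ is of \emph{pure} type $(2,0)$, equivalently a nonzero holomorphic two-form on $S_1\times S_2$ whose periods all lie in the lattice $\Lambda$. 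This is a nontrivial Hodge-theoretic condition that fails for a generic product of curves of genus $\geq 2$: the periods of $p_1^*\alpha\wedge p_2^*\beta$ over cycles $\gamma_1\times\gamma_2$ are the products $(\int_{\gamma_1}\alpha)(\int_{\gamma_2}\beta)$, and confining these to a rank-two lattice forces each factor to arise from a map to an elliptic curve. The paper secures this by \emph{choosing} $S_1$ and $S_2$ to be ramified covers $f_i\colon S_i\to E$ of the Gaussian elliptic curve $E=\mathbb C/(\mathbb Z\oplus\sqrt{-1}\,\mathbb Z)$ and taking $\eta=\pi_1^*(f_1^*dz)\wedge\pi_2^*(f_2^*dz)$, whose periods are products of Gaussian integers and hence lie in $\Lambda$; the bundle and $\theta$ are then built by hand from local primitives of $\eta$ via a \v Cech cocycle with values in $E$. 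Your proposal never makes this choice of $S_1,S_2$, so the object you need is not shown to exist; once that choice is inserted, the rest of your argument works.
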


\begin{proof}
We shall first construct a holomorphic two-form $\Omega$ on the product $S_1\times S_2$ of 
two-Riemann surfaces such that the periods of $\Omega$ belong to a lattice $\Lambda$ in 
$\mathbb C$.

Consider the standard elliptic curve $E\,=\,{\mathbb C}/\Lambda$, with $\Lambda
\,=\, \mathbb Z \oplus \sqrt{-1}\mathbb Z$. Let $z$ be the coordinate on $\mathbb C$ and $dz$
the associated standard (translation invariant) one-form
on $E$. Notice that the periods of $dz$ form the lattice $\Lambda\,=\,{\mathbb Z}\oplus
\sqrt{-1}\mathbb Z$. 

Choose two Riemann surfaces $S_1$ and $S_2$ (of genus $g \,\geq\, 2$) admitting holomorphic 
ramified covers $f_1 \,:\, S_1 \,\longrightarrow\, E$ and $f_2 \,:\, S_2 \,
\longrightarrow\, E$. Let us denote by $\Omega$ the 
holomorphic two-form on the complex surface $S_1 \times S_2$ defined by the pull-back: 
$$\pi_1^*(f_1^*dz) \wedge \pi_2^*(f_2^*dz)\, ,$$ where $\pi_1$ and $\pi_2$ are the projections of 
$S_1 \times S_2$ on the first and the second factor respectively. The periods of $\Omega$ 
belong to the lattice $\Lambda$ in $\mathbb C$.

Fix an open cover $\{U_i\}$ of $S_1 \times S_2$ such that all $U_i$ and all connected components of $U_i 
\cap U_j$ are contractible. Then on each $U_i$ there exists a holomorphic one-form $\omega_i$ 
such that $\Omega_i\,=\,d \omega_i$, where $\Omega_i$ denotes the restriction of $\Omega$ to 
$U_i$. On any intersections $U_i \cap U_j$, we have $\omega_i-\omega_j\,=\,dF_{ij}$, where
$F_{ij}$ is a holomorphic function defined on $U_i\cap U_j$.

On triple intersections $U_i \cap U_j \cap U_k$, the functions $F_{ij} +F_{jk}+F_{ki}$ form a 
locally constant two-cocycle which represents the class of $\Omega$ in $H^2(S_1 \times S_2,\, { 
\mathbb C})$. Hence we can choose the forms $\omega_{i}$ and the associated functions $F_{ij}$ 
in such a way that $F_{ij}+F_{jk}+F_{ki }$ belongs to the lattice
$\Lambda$ of periods for every triple $i,\, j,\, k$.

Consider then every holomorphic function $F_{ij}$ as taking values in the translations group $E = 
{\mathbb C}/ \Lambda$. In this way we construct an associate one-cocycle with values in $E$. Let 
us form the corresponding holomorphic principal elliptic bundle $$\pi : P_E\, \longrightarrow\, 
S_1 \times S_2$$ with $E$ as the structure group.

On each local trivialization $U_i \times E$ of $P_E$, consider the local one-form 
$p_1^*(\omega_i)+p_2^*dz $, where $p_1$ and $p_2$ are the projections of $U_i\times E$ on the 
first and the second factor respectively. By construction, these local one-forms glue to a 
global holomorphic one-form $\theta$ on the principal elliptic bundle $P_E$. Moreover we have
$d\theta\,=\,\pi^*\Omega$. Since $\Omega$ does not vanish, $\theta$ is a non-closed 
holomorphic one-form on $P_E$, in particular the complex manifold $P_E$ is non-K\"ahler. In 
fact $\theta$ is a holomorphic connection form on the principal elliptic bundle $P_E$ whose
curvature is $\pi^*\Omega$ \cite{At}.

Consider a section $\omega_1$ of the canonical bundle $K_{S_1}$ of $S_1$ such that $\omega_1\,= 
\,g_1 \cdot f_1^*(dz)$, where $g_1$ is a non-constant meromorphic function on $S_1$. Also
consider a holomorphic section $\omega_2$ of the canonical bundle $K_{S_2}$ of $S_2$ such that 
$\omega_2\,=\,g_2\cdot f_2^*(dz)$ with $g_2$ being a non-constant meromorphic function on $S_2$.

Let $$\omega\,=\,(\theta, \,\pi^* \pi_1^*\omega_1, \,\pi^*\pi_2^*\omega_2) \,\in\, 
\Omega^1(P_E,\, \mathbb C^3)$$ be the associated branched holomorphic co-parallelism on $P_E$. 
Then $$d \theta \,=\, \pi^*\Omega\,=\, (g \circ \pi) \cdot \pi^* (\pi_1^*(\omega_1) \wedge 
\pi_2^*(\omega_2))\, ,$$ with $g= (g_1^{-1} \circ \pi_1) \cdot (g_2^{-1} \circ \pi_2)$ being a 
non-constant meromorphic function on $S_1 \times S_2$. Since $g \circ \pi$ is a non-constant 
(meromorphic) function on $P_E$, this implies that the branched holomorphic parallelism $\omega$ is 
non-flat for any Lie algebra structure on the vector space ${\mathbb C}^3$. Moreover, $\omega$ 
is not locally homogeneous on any nonempty open subset in $P_E$.
\end{proof}

We have seen in Theorem \ref{flat} (iii) that compact complex manifolds admitting flat branched 
holomorphic co-parallelisms have infinite fundamental group. We do not know if there exists 
compact complex simply connected manifolds bearing branched holomorphic co-parallelism 
(necessarily non-flat with respect to any Lie algebra structure, as that in Theorem \ref{non 
flat}).

\begin{remark}
The manifold $P_E$ in Theorem \ref{non flat} is a ramified cover of the parallelizable 
manifold $H/\Gamma$, where $H$ is the complex Heisenberg group of upper triangular unipotent 
$(3 \times 3)$ matrices with complex entries and $\Gamma$ is the lattice of matrices with 
Gaussian integers as entries. This quotient $H/\Gamma$ is biholomorphic to a principal 
elliptic bundle with fiber $E\,=\,{\mathbb C}/\Lambda$ (recall that $\Lambda\,=\,{\mathbb 
Z}\oplus \sqrt{-1}{\mathbb Z}$) over the two-dimensional compact complex torus $E \times E$. 
The map
$$
f\,=\,(f_1\circ\pi_1,\, f_2\circ\pi_2) \,:\, S_1\times S_2 \,\longrightarrow\, E\times E
$$
in the proof 
of Theorem \ref{non flat} is a ramified cover. The bundle $P_E$ is the pull-back of the 
elliptic bundle $H/\Gamma \,\longrightarrow\, E \times E$ through $f$. Consequently, $P_E$ is 
a ramified cover  of $H/\Gamma$ and inherits a branched holomorphic co-parallelism which is 
flat with respect to the Lie algebra of $H$ (the three dimensional complex Heisenberg algebra) (see Remark \ref{1}).

We do not know examples of compact complex manifolds admitting branched holomorphic 
co-parallelisms and which are not ramified covers of parallelizable manifolds. In particular, 
we do not know if all compact complex manifold admitting branched holomorphic co-parallelisms 
also admit flat branched holomorphic co-parallelisms (with respect to some Lie algebra 
structure).
\end{remark}

\section*{Acknowledgements}

The second-named author 
wishes to thank T.I.F.R. Mumbai for hospitality. Both authors wishes to thank ICTS Bangalore 
and IISc Bangalore for hospitality.
 
This work has been supported by the French government through the UCAJEDI Investments in the 
Future project managed by the National Research Agency (ANR) with the reference number 
ANR2152IDEX201. The first author is partially supported by a J. C. Bose Fellowship.

%%%%%%%%%%%%%%%%%%%%%%%%%%%%%%%%%%%%%%%%%%%%%%%%%%%%%%%%%%%%%%%%%%%%%%%%%%%%%%%%%%%%%%%%%%

\end{document}